\documentclass[10pt,oneside,english]{amsart}
\usepackage[T1]{fontenc}
\usepackage[latin9]{inputenc}
\usepackage{geometry}
\geometry{verbose,tmargin=2cm,bmargin=2cm,lmargin=2cm,rmargin=2cm}
\usepackage{amsthm}
\usepackage{amstext}
\usepackage{amssymb}

\makeatletter
\numberwithin{equation}{section}
\numberwithin{figure}{section}
  \theoremstyle{plain}
  \newtheorem*{thm*}{\protect\theoremname}
  \theoremstyle{plain}
  \newtheorem*{cor*}{\protect\corollaryname}
  \theoremstyle{plain}
  \newtheorem*{conjecture*}{\protect\conjecturename}
\theoremstyle{plain}
\newtheorem{thm}{\protect\theoremname}
  \theoremstyle{plain}
  \newtheorem{lem}[thm]{\protect\lemmaname}
  \theoremstyle{plain}
  \newtheorem{cor}[thm]{\protect\corollaryname}
  \theoremstyle{plain}
  \newtheorem{prop}[thm]{\protect\propositionname}
  \theoremstyle{remark}
  \newtheorem{rem}[thm]{\protect\remarkname}
  \theoremstyle{definition}
  \newtheorem{example}[thm]{\protect\examplename}

\usepackage{amsfonts}
\usepackage{xypic}
\theoremstyle{definition}

\makeatother

\usepackage{babel}
  \providecommand{\conjecturename}{Conjecture}
  \providecommand{\corollaryname}{Corollary}
  \providecommand{\examplename}{Example}
  \providecommand{\lemmaname}{Lemma}
  \providecommand{\propositionname}{Proposition}
  \providecommand{\remarkname}{Remark}
  \providecommand{\theoremname}{Theorem}
\providecommand{\theoremname}{Theorem}

\begin{document}
\subjclass[2010]{14R05, 14R25, 14E07}
\keywords{Rigid surfaces, Cancellation Problem}

\author{Adrien Dubouloz}

\address{IMB UMR5584, CNRS, Univ. Bourgogne Franche-Comté, F-21000 Dijon,
France.}

\email{adrien.dubouloz@u-bourgogne.fr}

\thanks{This research was partialy supported by ANR Grant \textquotedbl{}BirPol\textquotedbl{}
ANR-11-JS01-004-01. }

\title{Rigid affine surfaces with isomorphic $\mathbb{A}^{2}$-cylinders}
\begin{abstract}
We construct families of smooth affine surfaces with pairwise non
isomorphic $\mathbb{A}^{1}$-cylinders but whose $\mathbb{A}^{2}$-cylinders
are all isomorphic. These arise as complements of cuspidal hyperplane
sections of smooth projective cubic surfaces. 
\end{abstract}
\maketitle

\section*{Introduction}

The Zariski Cancellation Problem, which asks whether two, say smooth
affine, algebraic varieties $X$ and $Y$ with isomorphic cylinders
$X\times\mathbb{A}^{n}$ and $Y\times\mathbb{A}^{n}$ for some $n\geq1$
are isomorphic themselves, has been studied very actively during the
past decades culminating recently with a negative solution in dimension
$3$ and positive characteristic for the case $X=\mathbb{A}^{3}$
\cite{Gu14}. The situation in the complex case, and more generally
over any algebraically closed field of characteristic zero, is more
contrasted: cancellation is known to hold for curves \cite{AEH72}
and for $\mathbb{A}^{2}$ \cite{Fu79}, but many counter-examples
in every dimension higher or equal to $2$ have been discovered (see
\cite{Ru14} for a survey), inspired by the two pioneering constructions
of Hochster \cite{Ho72} and Danielewski \cite{Dan89}. 

Essentially all known families are counter-examples to the cancellation
of $1$-dimensional cylinders which arise from the existence of nontrivial
decompositions of certain locally trivial $\mathbb{A}^{2}$-bundles
over a base scheme $Z$. Namely, Hochster type constructions rely
on the existence of non free, $1$-stably free, projective modules
which in geometric term correspond to non trivial decompositions of
the trivial bundle $Z\times\mathbb{A}^{r+1}\rightarrow Z$ into a
trivial $\mathbb{A}^{1}$-bundle over a nontrivial vector bundle $E\rightarrow Z$
of rank $r\geq1$. For every such bundle, the varieties $X=E$ and
$Y=Z\times\mathbb{A}^{r}$ have isomorphic cylinders $X\times\mathbb{A}^{1}$
and $Y\times\mathbb{A}^{1}$, and one then gets a counter-example
to the cancellation problem whenever $X$ and $Y$, which by definition
are non isomorphic as schemes over $Z$, are actually non isomorphic
as abstract algebraic varieties \cite{Je10}. In contrast, Danielewski
type constructions usually involve non trivial decompositions of a
principal homogeneous $\mathbb{G}_{a}^{2}$-bundle $W\rightarrow Z$
into pairs $W\rightarrow X\rightarrow Z$ and $W\rightarrow Y\rightarrow Z$
consisting of trivial $\mathbb{G}_{a}$-bundles over nontrivial $\mathbb{G}_{a}$-bundles
$X\rightarrow Z$ and $Y\rightarrow Z$ with affine total spaces,
with the property that $W$ is isomorphic to the fiber product $W=X\times_{Z}Y$.
The isomorphism $X\times\mathbb{A}^{1}\simeq W\simeq Y\times\mathbb{A}^{1}$
is granted by definition, and similarly as in the previous type of
construction, one obtains counter-examples to the cancellation problem
as soon as $X$ and $Y$ are not isomorphic as abstract varieties. 

Non-cancellation phenomena with respect to higher dimensional cylinders
are more mysterious. In fact, it seems for instance that not even
a single explicit example of a pair of non-isomorphic varieties $X$
and $Y$ which fail the $\mathbb{A}^{2}$-cancellation property in
a minimal way, in the sense that $X\times\mathbb{A}^{2}$ and $Y\times\mathbb{A}^{2}$
are isomorphic while $X\times\mathbb{A}^{1}$ and $Y\times\mathbb{A}^{1}$
are still non isomorphic, is known so far. In this article, we fill
this gap by constructing a positive dimensional moduli of smooth affine
surfaces which fail the $\mathbb{A}^{2}$-cancellation property minimally.
That such varieties exist was certainly a natural expectation, and
their existence is therefore neither really surprising, nor probably
exciting in itself due to the abundance of simpler counter-examples
to the cancellation problem. Their interest lies rather in the fact
that they provide additional insight on the algebro-geometric properties
that a variety should satisfy in order to fail cancellation. 

Indeed, it follows from Iitaka-Fujita strong Cancellation Theorem
\cite{IiFu77} that a smooth affine variety $X$ which fails cancellation
has negative logarithmic Kodaira dimension, a property conjecturally
equivalent in dimension higher or equal to $3$ to the fact that $X$
is covered by images of the affine line and equivalent for surfaces
to the existence of an $\mathbb{A}^{1}$-fibration $\pi:X\rightarrow C$
over a smooth curve \cite{MS80}, i.e. a flat fibration with general
fibers isomorphic to the affine line. In the particular case of the
cancellation problem for $1$-dimensional cylinders, a further striking
discovery of Makar-Limanov is that the existence of nontrivial actions
of the additive group $\mathbb{G}_{a}$ on $X$ is a necessary condition
for non-cancellation. Namely, Makar-Limanov semi-rigidity theorem
\cite{MLNotes} (see also \cite[Proposition 9.23]{FreuBook}) asserts
that if $X$ is rigid, i.e. does not admit any nontrivial $\mathbb{G}_{a}$-action,
then the projection $\mathrm{pr}_{X}:X\times\mathbb{A}^{1}\rightarrow X$
is invariant under all $\mathbb{G}_{a}$-actions on $X$. As a consequence,
if either $X$ or $Y$ is rigid then every isomorphism between $X\times\mathbb{A}^{1}$
and $Y\times\mathbb{A}^{1}$ descends to an isomorphism between $X$
and $Y$. Combined with Fieseler's topological description of algebraic
quotient morphisms of $\mathbb{G}_{a}$-actions on smooth complex
affine surfaces \cite{Fie94}, these results imply that a smooth affine
surface which fails $\mathbb{A}^{1}$-cancellation must admit a nontrivial
$\mathbb{G}_{a}$-action whose algebraic quotient morphism $\pi:X\rightarrow X/\!/\mathbb{G}_{a}=\mathrm{Spec}(\Gamma(X,\mathcal{O}_{X})^{\mathbb{G}_{a}})$
is not a locally trivial $\mathbb{A}^{1}$-bundle. This holds of course
for the two smooth surfaces $xz-y(y+1)=0$ and $x^{2}z-y(y+1)=0$
used by Danielewski in his celebrated counter-example, showing a posteriori
that his construction was essentially optimal in this dimension. The
rich families of existing counter-examples to $\mathbb{A}^{1}$-cancellation
in dimension $2$ lend support to the conjecture that every smooth
affine surface which is neither rigid nor isomorphic to the total
space of line bundle over an affine curve fails the $\mathbb{A}^{1}$-cancellation
property. 

In view of this conjecture, a smooth affine surface $X$ which fails
the $\mathbb{A}^{2}$-cancellation property in a minimal way must
be simultaneously rigid and equipped with an $\mathbb{A}^{1}$-fibration
$\pi:X\rightarrow C$ over a smooth curve, and the well known fact
that $\mathbb{A}^{1}$-fibrations over affine curves are algebraic
quotient morphisms of nontrivial $\mathbb{G}_{a}$-actions implies
further that $C$ must be projective. This is precisely the case for
the family of surfaces we construct in this article, a particular
example being the smooth affine cubic surfaces 
\[
X=\{(-1+\alpha\sqrt[3]{2}x_{2}+\alpha\sqrt[3]{2}x_{3})^{3}+8(x_{1}^{3}+x_{2}^{3}+x_{3}^{3})=0\}\;\textrm{and\;}X'=\{(-1+2\alpha x_{1}+\alpha\sqrt[3]{2}x_{2}+\alpha\sqrt[3]{2}x_{3})^{3}+8(x_{1}^{3}+x_{2}^{3}+x_{3}^{3})=0\}
\]
in $\mathbb{A}^{3}=\mathrm{Spec}(\mathbb{C}[x_{1},x_{2},x_{3}])$,
where $\alpha=\exp(i\pi/3)$, which are both rigid and equipped with
an $\mathbb{A}^{1}$-fibration over $\mathbb{P}^{1}$. These arise
as the complements in the Fermat cubic surface $V=\{x_{0}^{3}+x_{1}^{3}+x_{2}^{3}+x_{3}^{3}=0\}$
in $\mathbb{P}^{3}$ of the plane cuspidal cubics $C=\{-(x_{2}+x_{3})^{3}+4(x_{1}^{3}+x_{2}^{3}+x_{3}^{3})\}=0$
and $C'=\{-((\sqrt[3]{2})^{2}x_{1}+x_{2}+x_{3})^{3}+4(x_{1}^{3}+x_{2}^{3}+x_{3}^{3})=0\}$
obtained by intersecting $V$ with its tangent hyperplane at the points
$p=[\alpha\sqrt[3]{2}:0:1:1]$ and $p'=[\alpha\sqrt[3]{2}:\sqrt[3]{2}:-1:1]$
respectively. The group of automorphisms of $V$ being isomorphic
to $\mathbb{Z}_{3}\times\mathfrak{S}_{4}$, where $\mathbb{Z}_{3}$
is the 3-torsion subgroup of $\mathrm{PGL(4;\mathbb{C})}$ and where
$\mathfrak{S}_{4}$ denotes the group of permutations of the variables,
the fact that $p$ and $p'$ do not belong to a same $\mathrm{Aut}(V)$-orbit
implies that the pairs $(V,C)$ and $(V,C')$ are not isomorphic.
Our main result just below then implies in turn that $X$ and $X'$
are non isomorphic, with isomorphic $\mathbb{A}^{2}$-cylinders $X\times\mathbb{A}^{2}$
and $X'\times\mathbb{A}^{2}$. 
\begin{thm*}
Let $(V_{i},C_{i})$, $i=1,2$, be non isomorphic pairs consisting
of a smooth cubic surface $V_{i}\subset\mathbb{P}^{3}$ and a cuspidal
hyperplane section $C_{i}=V_{i}\cap H_{i}$. Then the affine surfaces
$X_{i}=V_{i}\setminus C_{i}$ are non isomorphic, with non isomorphic
$\mathbb{A}^{1}$-cylinders $X_{i}\times\mathbb{A}^{1}$ but with
isomorphic $\mathbb{A}^{2}$-cylinders $X_{i}\times\mathbb{A}^{2}$,
$i=1,2$. 
\end{thm*}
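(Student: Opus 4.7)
My plan is to address the three assertions separately, using rigidity of the $X_i$ as the bridge linking the first two and a direct geometric construction for the third.

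First I would classify the $\mathbb{A}^1$-fibrations on $X_i$. The pencil of hyperplanes in $\mathbb{P}^3$ containing the tangent line to $C_i$ at its cusp cuts on $V_i$ a pencil of curves meeting $C_i$ only at the cusp, which after resolution yields an $\mathbb{A}^1$-fibration $\pi_i : X_i \to \mathbb{P}^1$. By analyzing a minimal SNC log-resolution of the pair $(V_i, C_i)$ and the possible $\mathbb{P}^1$-rulings of its smooth projective model, I would show that every $\mathbb{A}^1$-fibration on $X_i$ coincides with $\pi_i$ up to an automorphism of $\mathbb{P}^1$. Two consequences follow. Since the base $\mathbb{P}^1$ is projective, $\pi_i$ cannot be the algebraic quotient of any nontrivial $\mathbb{G}_a$-action, and uniqueness of the fibration rules out any other $\mathbb{G}_a$-action; hence $X_i$ is rigid. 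Second, the same log-resolution analysis recovers the pair $(V_i, C_i)$ from $X_i$ as its unique smooth projective completion with irreducible anticanonical cuspidal boundary of self-intersection $3$, so any isomorphism $X_1 \cong X_2$ would lift to an isomorphism of pairs $(V_1, C_1) \cong (V_2, C_2)$, contradicting the hypothesis. Thus $X_1 \not\cong X_2$.

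Since the $X_i$ are rigid, Makar-Limanov semi-rigidity as recalled in the introduction implies that any isomorphism $X_1 \times \mathbb{A}^1 \to X_2 \times \mathbb{A}^1$ is compatible with the first-factor projections and therefore descends to an isomorphism $X_1 \cong X_2$. By what precedes no such isomorphism exists, so the $\mathbb{A}^1$-cylinders $X_i \times \mathbb{A}^1$ are non-isomorphic as well.

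The main obstacle is the isomorphism $X_1 \times \mathbb{A}^2 \cong X_2 \times \mathbb{A}^2$. My approach, patterned after Danielewski's construction one dimension higher, is to exhibit a smooth affine threefold $W$ carrying two structures of principal $\mathbb{G}_a^2$-bundle $q_i : W \to X_i$; since the $X_i$ are affine, such bundles are automatically trivial as varieties over the base field, so each $q_i$ produces an isomorphism $X_i \times \mathbb{A}^2 \cong W$. The candidate $W$ would be built as an affine modification of a common smooth projective threefold birational to both $V_i \times \mathbb{P}^2$, supported on divisors lying over $C_i \times \mathbb{A}^2$, in such a way that the cuspidal cubics $C_i$, which share a common normalization by $\mathbb{A}^1$, give rise to two compatible $\mathbb{G}_a^2$-actions on $W$ with quotients $X_1$ and $X_2$ respectively. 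Verifying that both actions are free with the expected quotient morphisms is the delicate technical point, and is where the specific geometry of cuspidal hyperplane sections on smooth cubic surfaces --- rather than a more general class of log-rigid anticanonical pairs --- enters essentially.
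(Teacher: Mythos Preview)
Your treatment of the first two assertions is broadly reasonable, though it differs from the paper. The paper does not argue via uniqueness of the $\mathbb{A}^1$-fibration or uniqueness of a log-completion; instead it shows directly (Lemma~\ref{lem:Bir-decomp}) that any birational extension $\overline{\psi}:V_1\dashrightarrow V_2$ of an isomorphism $\psi:X_1\to X_2$ factors as the Geiser involution centered at the cusp followed by an isomorphism of pairs, by a minimal-resolution argument exploiting that the proper transforms of $C_i$ and the exceptional divisor $E_i$ are tangent $(-1)$-curves. Rigidity then falls out of finiteness of $\mathrm{Aut}(V,C)$, and the $\mathbb{A}^1$-cylinder step is the same Makar-Limanov argument you give.

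The real problem is your third step. First a dimension check: a principal $\mathbb{G}_a^2$-bundle over a surface is four-dimensional, not a threefold, so your $W$ cannot be what you say. More importantly, the mechanism you sketch --- a Danielewski-type common cover that is a \emph{trivial} affine-space bundle over each $X_i$ --- is exactly what cannot work here in one step. The fiber product $W=X_1\times_B X_2$ over the common base $B$ of the $\mathbb{A}^1$-fibrations is only a torsor under the pulled-back line bundle $\rho_i^*\Omega^1_B$; over the affine $X_i$ such a torsor is trivial, so $W$ is the total space of the line bundle $\rho_i^*\Omega^1_B\to X_i$, but that line bundle is \emph{not} trivial (if it were, you would get $X_1\times\mathbb{A}^1\cong X_2\times\mathbb{A}^1$, contradicting what you just proved). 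The paper's essential extra idea, absent from your proposal, is a Hochster-type second step: one observes that $\pi_1^*\rho_1^*\Omega^1_B$ and $\pi_2^*\rho_2^*\Omega^1_B$ coincide in $\mathrm{Pic}(W)$, pulls back the dual $(\Omega^1_B)^\vee$ to obtain a fourfold $E\to W$, and then $E\to X_i$ is the rank-two bundle $\rho_i^*\Omega^1_B\oplus\rho_i^*(\Omega^1_B)^\vee$ with trivial determinant. Pavaman Murthy's theorem on vector bundles over affine surfaces birational to ruled surfaces then forces $E\cong X_i\times\mathbb{A}^2$. Your ``affine modification of a common smooth projective threefold'' gives no indication of how to produce a rank-two object with trivial determinant, nor any substitute for Murthy's splitting result; without such an ingredient the argument does not close.
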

As a consequence, all smooth affine surfaces arising as complements
of cuspidal hyperplane sections of smooth projective cubic surfaces
have isomorphic $\mathbb{A}^{2}$-cylinders. Noting that the projective
closure in $\mathbb{P}^{3}$ of the surface $X_{0}\subset\mathbb{A}^{3}=\mathrm{Spec}(\mathbb{C}[x,y,z])$
with equation $x^{2}y+y^{2}+z^{3}+1=0$ is a smooth cubic surface
intersecting the plane at infinity along the cuspidal cubic $x^{2}y+z^{3}=0$,
we obtain the following:
\begin{cor*}
Let $X$ be a smooth affine surface isomorphic to the complement of
a cuspidal hyperplane section of a smooth projective cubic surface.
Then $X\times\mathbb{A}^{2}$ is isomorphic to the affine cubic fourfold
$Z\subset\mathbb{A}^{5}=\mathrm{Spec}(\mathbb{C}[x,y,z][u,v])$ with
equation $x^{2}y+y^{2}+z^{3}+1=0$. Furthermore, $X$ is isomorphic
to the geometric quotient of a proper action of the group $\mathbb{G}_{a}^{2}$
on $Z$. \\

\end{cor*}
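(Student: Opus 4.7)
The plan is to reduce the corollary to the main theorem via an explicit normal form: I would identify the affine fourfold $Z$ as a cylinder $X_{0}\times\mathbb{A}^{2}$ over an explicit surface $X_{0}$ satisfying the hypotheses of the main theorem, and then read off the $\mathbb{G}_{a}^{2}$-quotient description directly from the translation action on the cylinder factor.

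The first step is to verify the claim already alluded to in the paragraph preceding the corollary: namely, that the surface $X_{0}=\{x^{2}y+y^{2}+z^{3}+1=0\}\subset\mathbb{A}^{3}$ is itself the complement of a cuspidal hyperplane section of a smooth projective cubic. Homogenizing with a new variable $w$ gives $\overline{X_{0}}=\{x^{2}y+y^{2}w+z^{3}+w^{3}=0\}\subset\mathbb{P}^{3}$. A direct Jacobian computation should show that the four partial derivatives have no common zero on this cubic, so $\overline{X_{0}}$ is smooth; the hyperplane section at infinity $\{w=0\}$ then becomes the plane cubic $x^{2}y+z^{3}=0$, which in the chart $y=1$ reads $x^{2}+z^{3}=0$ and therefore has a unique cusp. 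Consequently $C_{0}=\overline{X_{0}}\cap\{w=0\}$ is a cuspidal hyperplane section, and $X_{0}=\overline{X_{0}}\setminus C_{0}$. This is the only place where a genuine computation is required, and it is entirely routine.

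Next I would observe that, by the very form of its defining equation, $Z\subset\mathbb{A}^{5}=\mathrm{Spec}(\mathbb{C}[x,y,z][u,v])$ coincides with $X_{0}\times\mathbb{A}^{2}$, the variables $u,v$ playing the role of free coordinates on the $\mathbb{A}^{2}$-factor. For any smooth affine surface $X$ isomorphic to the complement of a cuspidal hyperplane section of a smooth projective cubic surface, applying the main theorem to the pair $(\overline{X_{0}},C_{0})$ and the corresponding pair for $X$ then produces an isomorphism $X\times\mathbb{A}^{2}\simeq X_{0}\times\mathbb{A}^{2}=Z$, which is the first assertion.

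For the second assertion, the translation action of $\mathbb{G}_{a}^{2}$ on the second factor of $X_{0}\times\mathbb{A}^{2}$ is free, hence proper, and its geometric quotient is the projection onto $X_{0}\simeq X$. Transporting this action along the isomorphism $Z\simeq X\times\mathbb{A}^{2}$ yields the desired proper $\mathbb{G}_{a}^{2}$-action on $Z$ with geometric quotient isomorphic to $X$. Once the smoothness and cuspidality checks in the first step are in place, the remainder is a purely formal consequence of the main theorem, and I do not anticipate any genuine obstacle; the corollary is essentially a repackaging of the theorem in terms of a single explicit model.
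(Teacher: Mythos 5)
Your route to the first assertion is exactly the paper's: the sentence preceding the corollary records precisely your computation, namely that the projective closure $\{x^{2}y+y^{2}w+z^{3}+w^{3}=0\}$ of $X_{0}=\{x^{2}y+y^{2}+z^{3}+1=0\}$ is a smooth cubic surface meeting the plane at infinity along the cuspidal cubic $x^{2}y+z^{3}=0$, so that $Z=X_{0}\times\mathbb{A}^{2}$ and the theorem applied to $(\overline{X_{0}},C_{0})$ and the pair defining $X$ gives $X\times\mathbb{A}^{2}\simeq X_{0}\times\mathbb{A}^{2}=Z$. That part is correct and complete.

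Your argument for the second assertion, however, contains a real slip: you take the translation action of $\mathbb{G}_{a}^{2}$ on the second factor of $X_{0}\times\mathbb{A}^{2}$ and assert that its geometric quotient is ``$X_{0}\simeq X$''. But $X_{0}$ is in general \emph{not} isomorphic to $X$ --- the whole point of the theorem is that non-isomorphic pairs $(V,C)$ give non-isomorphic surfaces with isomorphic $\mathbb{A}^{2}$-cylinders --- so the quotient of that particular action is $X_{0}$, not $X$, and ``transporting'' it along $Z\simeq X\times\mathbb{A}^{2}$ only moves the same action to the other model, still with quotient $X_{0}$. The correct (and easy) fix is to run the transport in the other direction: take the translation $\mathbb{G}_{a}^{2}$-action on the $\mathbb{A}^{2}$-factor of $X\times\mathbb{A}^{2}$, whose geometric quotient is the projection to $X$, and push it forward to $Z$ through the isomorphism $X\times\mathbb{A}^{2}\simeq Z$ furnished by the theorem; the resulting action on $Z$ has geometric quotient $X$, as claimed. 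Note also that ``free, hence proper'' is not a valid implication in general (free $\mathbb{G}_{a}$-actions need not be proper); here properness should instead be justified by observing that the quotient map $X\times\mathbb{A}^{2}\rightarrow X$ is a trivial principal $\mathbb{G}_{a}^{2}$-bundle, equivalently that the action morphism $\mathbb{G}_{a}^{2}\times(X\times\mathbb{A}^{2})\rightarrow(X\times\mathbb{A}^{2})\times(X\times\mathbb{A}^{2})$ is a closed immersion. With these two corrections the proof is the same as the paper's intended one.
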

The scheme of the proof of the Theorem given in the next section is
the following. The fact that the affine surfaces $X_{1}$ and $X_{2}$
are non-isomorphic follows from the non-isomorphy of the pairs $(V_{1},C_{1})$
and $(V_{2},C_{2})$ via an argument of classical birational geometry
of projective cubic surfaces, which simultaneously renders the conclusion
that $X_{1}$ and $X_{2}$ are rigid. The non isomorphy of the cylinders
$X_{1}\times\mathbb{A}^{1}$ and $X_{2}\times\mathbb{A}^{1}$ is then
a straightforward consequence Makar-Limanov's semi-rigidity Theorem. 

The existence of an isomorphism between the $\mathbb{A}^{2}$-cylinders
$X_{1}\times\mathbb{A}^{2}$ and $X_{2}\times\mathbb{A}^{2}$ is derived
in two steps: the first one consists of another instance of a Danielewski
fiber product trick argument, which provides a smooth affine threefold
$W$ equipped with simultaneous structures of line bundles $\pi_{1}:W\rightarrow X_{1}$
and $\pi_{2}:W\rightarrow X_{2}$ over $X_{1}$ and $X_{2}$. But
here, in contrast with the situation in Danielewski's counter-example,
the fact that the $\mathbb{A}^{1}$-cylinders over $X_{1}$ and $X_{2}$
are not isomorphic implies that these two line bundles cannot be simultaneously
trivial. Nevertheless, the crucial observation which enables a second
step, reminiscent to Hochster construction, is that the pull-backs
via the isomorphisms $\pi_{i}^{*}:\mathrm{Pic}(X_{i})\rightarrow\mathrm{Pic}(W)$
of the classes of these lines bundles in the Picard groups of $X_{1}$
and $X_{2}$, say $L_{1}$ and $L_{2}$, coincide. Letting $q:E\rightarrow W$
be a line bundle representing the common inverse in $\mathrm{Pic}(W)$
of $\pi_{1}^{*}L_{1}=\pi_{2}^{*}L_{2}$, the composition $\pi_{i}\circ q:E\rightarrow X_{i}$
is then a vector bundle of rank $2$ isomorphic to the direct sum
$L_{i}\oplus L_{i}^{\vee}$, where $L_{i}^{\vee}$ denotes the dual
of $L_{i}$, hence isomorphic to $\det(E)\oplus\mathbb{A}_{X_{i}}^{1}=(L_{i}\otimes L_{i}^{\vee})\oplus\mathbb{A}_{X_{i}}^{1}\simeq\mathbb{A}_{X_{i}}^{1}\oplus\mathbb{A}_{X_{i}}^{1}$
by virtue of result of Pavaman Murthy \cite{Mur69} asserting that
every vector bundle on a smooth affine surface birationaly equivalent
to a ruled surface is isomorphic to the direct sum of a trivial bundle
with a line bundle. 

\[\xymatrix{X_1\times \mathbb{A}^2 \ar[dd]_{\mathrm{pr}_{S_1}} \ar[r]^{\sim} & E \ar[d]_q & X_2\times \mathbb{A}^2 \ar[dd]^{\mathrm{pr}_{X_2}} \ar[l]_{\sim} \\ & W \ar[dl]_{\pi_1} \ar[dr]^{\pi_2} & \\ X_1 & & X_2. }\]\\

The construction of these isomorphisms suggests the following strengthening
of the above conjecture characterizing smooth affine surfaces failing
the $\mathbb{A}^{1}$-cancellation property, which would settle the
question of the behavior of smooth affine surfaces under stabilization
by affine spaces:
\begin{conjecture*}
A smooth affine surface $X$ with negative logarithmic Kodaira dimension
is either isomorphic to the total space of a line bundle over a curve,
or it fails the $\mathbb{A}^{2}$-cancellation property. Furthermore,
every non rigid $X$ which fails the $\mathbb{A}^{2}$-cancellation
property also fails the $\mathbb{A}^{1}$-cancellation property.\\
 
\end{conjecture*}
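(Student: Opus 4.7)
The conjecture consists of two distinct statements, and the plan is to handle them separately, using in both cases the Miyanishi--Sugie criterion recalled in the introduction that a smooth affine surface $X$ of negative logarithmic Kodaira dimension admits an $\mathbb{A}^{1}$-fibration $\pi:X\rightarrow C$ over a smooth curve.

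For the first statement I would distinguish whether $C$ is affine or projective. If $C$ is affine, then $\pi$ is the algebraic quotient morphism of a nontrivial $\mathbb{G}_{a}$-action, and one expects the following dichotomy: either $\pi$ is a locally trivial $\mathbb{A}^{1}$-bundle, in which case $X$ is a line bundle over $C$ and lies in the excluded class, or $\pi$ has degenerate fibers described locally by Fieseler. In the latter case, a Danielewski fiber-product trick over a suitable partial compactification $C'$ of $C$ should produce a principal $\mathbb{G}_{a}^{2}$-bundle $W\rightarrow C'$ with two decompositions $W\rightarrow X_{i}\rightarrow C'$ as trivial $\mathbb{G}_{a}$-bundles over non-isomorphic $\mathbb{G}_{a}$-bundles $X_{1}\simeq X$, $X_{2}\simeq X'$; combined with Murthy's theorem, applied exactly as in the scheme of the paper's Theorem to absorb the residual line-bundle class into a second affine factor, this would furnish $X\times\mathbb{A}^{2}\simeq X'\times\mathbb{A}^{2}$ with $X\not\simeq X'$. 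If instead $C$ is projective, the paper's Theorem is the archetype, and the plan is to extend its construction uniformly over every irreducible component of the moduli of smooth projective completions $(V,D)$ of rigid $X$ with rational cuspidal boundary $D$, by showing that the isomorphism class of the $\mathbb{A}^{2}$-cylinder is constant on each component.

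For the second statement, fix a nontrivial $\mathbb{G}_{a}$-action on a non-rigid $X$ failing $\mathbb{A}^{2}$-cancellation and let $\pi:X\rightarrow X/\!/\mathbb{G}_{a}$ be its algebraic quotient. Since line bundles over smooth affine curves satisfy cancellation in every dimension, $\pi$ cannot be a locally trivial $\mathbb{A}^{1}$-bundle, so its degenerate fibers admit a nontrivial Fieseler description. The plan is then to perform the Danielewski fiber-product construction directly at the level of $X$, producing a smooth affine surface $Y$ with $X\times\mathbb{A}^{1}\simeq Y\times\mathbb{A}^{1}$ and $X\not\simeq Y$; the crucial point is that this cylinder-level gluing should be available without passing through the $\mathbb{A}^{2}$-cylinder, yielding a genuine failure of $\mathbb{A}^{1}$-cancellation, and the non-isomorphy of the total spaces is to be detected via Makar-Limanov's semi-rigidity theorem in its contrapositive form.

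The main obstacle is expected to be the uniform construction of the Danielewski partner in the first statement's projective-base case. In the cubic-surface setting of the Theorem, the key Picard-theoretic identity $\pi_{1}^{*}L_{1}=\pi_{2}^{*}L_{2}$ in $\mathrm{Pic}(W)$ is established by a classical birational geometry argument specific to cubic surfaces; its analogue for arbitrary pairs $(V,D)$ in a general component of the relevant moduli seems to require a uniform intersection-theoretic analysis on the family of minimal log resolutions that is not carried out in the present paper. The non-rigid second statement, by contrast, is expected to be more tractable, reducing essentially to Fieseler's combinatorics together with a bookkeeping of when the resulting Danielewski partner fails to be isomorphic to $X$.
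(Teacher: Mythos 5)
The statement you are addressing is presented in the paper as a \emph{conjecture}: the paper offers no proof of it, and its actual theorem (the case of cuspidal hyperplane sections of smooth cubic surfaces) is put forward precisely as evidence for, not a proof of, this general statement. Your proposal is therefore necessarily a research plan rather than a proof, and it has essential gaps beyond the one you acknowledge. In the affine-base case of the first assertion, producing a Danielewski partner $X'$ with $X'\not\simeq X$ is essentially the content of the \emph{other} conjecture stated in the introduction (that every non-rigid smooth affine surface not isomorphic to a line bundle over a curve fails $\mathbb{A}^{1}$-cancellation), which is also open; Fieseler's local description of the degenerate fibers does not by itself guarantee that the fiber-product partner is non-isomorphic to $X$, and in many configurations the construction returns a surface isomorphic to $X$ itself. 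In the projective-base case you correctly identify that the Picard-theoretic coincidence $\pi_{1}^{*}L_{1}=\pi_{2}^{*}L_{2}$ in $\mathrm{Pic}(W)$ is special to the cubic-surface situation, but the gap is larger than a missing intersection-theoretic computation: for a general rigid $\mathbb{A}^{1}$-fibered surface over $\mathbb{P}^{1}$ no partner $X'$ is known to exist at all, and the claim that the isomorphism class of the $\mathbb{A}^{2}$-cylinder is constant on components of a moduli space is itself a strong unproved assertion.

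For the second assertion your argument contains a concrete error: you propose to detect $X\not\simeq Y$ via ``Makar-Limanov's semi-rigidity theorem in its contrapositive form,'' but semi-rigidity applies only when one of the two surfaces is rigid, whereas this assertion concerns explicitly \emph{non-rigid} $X$, for which $\mathrm{ML}(X\times\mathbb{A}^{1})$ is in general strictly smaller than $\Gamma(X,\mathcal{O}_{X})$ and provides no descent of isomorphisms of cylinders. Moreover, your plan for this part never uses the hypothesis that $X$ fails $\mathbb{A}^{2}$-cancellation; you are in effect attempting to prove the stronger unconditional statement that every non-rigid $\mathbb{A}^{1}$-fibered surface with degenerate fibers fails $\mathbb{A}^{1}$-cancellation, which is open. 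In short, the statement is an open conjecture, the paper proves only a very particular family of cases, and your sketch does not close any of the gaps that make the general statement conjectural.
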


\section{Proof of the  theorem}

\subsection{Rigid affine cubic surfaces }

Given a smooth cubic surface $V\subset\mathbb{P}^{3}$ and a hyperplane
section $V\cap H$ consisting of an irreducible plane cuspidal cubic
$C$, the restriction of the projection $\mathbb{P}^{3}\dashrightarrow\mathbb{P}^{2}$
from the singular point $p$ of $C$ induces a rational map $V\dashrightarrow\mathbb{P}^{2}$
of degree $2$ with $p$ as a unique proper base point. Its lift to
the blow-up $\alpha:Y\rightarrow V$ of $V$ at $p$ coincides with
the morphism $\theta:Y\rightarrow\mathbb{P}^{2}$ defined by the anti-canonical
linear system $|-K_{Y}|$ and it factors through a birational morphism
$Y\rightarrow Z$ to the anti-canonical model $Z=\mathrm{Proj}_{\mathbb{C}}(\bigoplus_{m\geq0}H^{0}(Y,-mK_{Y}))$
of $Y$, followed by a Galois double cover $Z\rightarrow\mathbb{P}^{2}$
ramified over a quartic curve. The nontrivial involution of the double
cover $Z\rightarrow\mathbb{P}^{2}$ lifts to a biregular involution
of $Y$ exchanging the proper transform of $C$ and the exceptional
divisor $E$ of $\alpha$. This involution descends back to a birational
map $G_{p}:V\dashrightarrow V$, called the \emph{Geiser involution
of }$V$ \emph{with center at} $p$, which contracts $C$ to $p$
and restricts to a biregular involution $j_{p}:X\rightarrow X$ of
the affine complement $X$ of $C$ in $V$. 
\begin{lem}
\label{lem:Bir-decomp}Let $X_{i}$ be the complements of cuspidal
hyperplanes sections $C_{i}=V_{i}\cap H_{i}$ with respective singular
points $p_{i}$ of smooth cubic surfaces $V_{i}\subset\mathbb{P}^{3}$,
$i=1,2$. Then for every isomorphism $\psi:X_{1}\stackrel{\sim}{\rightarrow}X_{2}$,
the birational map $\overline{\psi}:V_{1}\dashrightarrow V_{2}$ extending
$\psi$ is either an isomorphism of pairs $(V_{1},C_{1})\stackrel{\sim}{\rightarrow}(V_{2},C_{2})$
or it factors in a unique way as the composition of the Geiser involution
$G_{p_{1}}:V_{1}\dashrightarrow V_{1}$ followed by an isomorphism
of pairs $(V_{1},C_{1})\stackrel{\sim}{\rightarrow}(V_{2},C_{2})$.
In particular, $X_{1}$ and $X_{2}$ are isomorphic if and only if
so are the pairs $(V_{1},C_{1})$ and $(V_{2},C_{2})$. \end{lem}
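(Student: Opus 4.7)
The plan is to analyze the birational extension of $\psi$ via its minimal resolution, splitting into two cases according to whether this extension is or is not a morphism. Any isomorphism $\psi\colon X_{1}\stackrel{\sim}{\rightarrow}X_{2}$ extends uniquely to a birational map $\overline{\psi}\colon V_{1}\dashrightarrow V_{2}$ whose indeterminacy lies in $C_{1}$, and at every smooth point of $C_{1}$ at which $\overline{\psi}$ is defined its image must lie in $C_{2}$ (otherwise $\psi^{-1}$ would be multi-valued). If $\overline{\psi}$ has no indeterminacy, it is a birational morphism of smooth projective surfaces with common invariant $K^{2}=3$, hence an isomorphism that automatically maps $C_{1}$ onto $C_{2}$. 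This yields the first alternative.

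Assuming $\overline{\psi}$ has indeterminacy, I would study a minimal resolution $\sigma\colon W\to V_{1}$ and $\tau\colon W\to V_{2}$ by compositions of point blow-ups, and prove that $W$ is the single blow-up $\alpha\colon Y\to V_{1}$ of $V_{1}$ at the cusp $p_{1}$, with $\sigma=\alpha$ and $\tau$ the contraction of the strict transform $\tilde{C}_{1}\subset Y$ of $C_{1}$. A preliminary observation is that the strict transforms $\tilde{C}_{1}$ and $\tilde{C}_{2}$ in $W$ cannot coincide: if they did, the sets of $\sigma$- and $\tau$-exceptional curves on $W$ would have to be equal (any $\sigma$-exceptional curve that is not $\tau$-exceptional would be forced to map birationally onto $C_{2}$ via $\tau$, a contradiction since $\tilde{C}_{2}=\tilde{C}_{1}$ already does so), which would make $\overline{\psi}$ itself a morphism, a contradiction. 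Hence $\tilde{C}_{2}$ is $\sigma$-exceptional and $\tilde{C}_{1}$ is $\tau$-exceptional.

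The key structural step is then to identify $W$ with $Y=\mathrm{Bl}_{p_{1}}V_{1}=\mathrm{Bl}_{p_{2}}V_{2}$. Here $\tilde{C}_{2}$ must be a $(-1)$-curve sitting at the bottom of the $\sigma$-chain, and the multiplicity formula $\tilde{C}_{2}^{2}=C_{2}^{2}-\sum m_{j}^{2}=-1$ gives $\sum m_{j}^{2}=4$ for the multiplicities $m_{j}$ of $C_{2}$ at the successive centers of $\tau$. For a cuspidal cubic $C_{2}$, the cusp $p_{2}$ is the unique point of multiplicity $2$, and the minimality of the resolution forces a single blow-up with $m_{1}=2$ centered at $p_{2}$. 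Then $\tilde{C}_{2}$ is identified with the $\alpha$-exceptional divisor $E$, which is swapped with $\tilde{C}_{1}$ by the biregular Geiser involution $\iota$ of $Y$. Unwinding, $\overline{\psi}\circ G_{p_{1}}=\tau\circ\iota\circ\alpha^{-1}$; since $\tau\circ\iota\colon Y\to V_{2}$ is a morphism contracting the exceptional divisor $E$ of $\alpha$, it factors as $\phi\circ\alpha$ for a birational morphism $\phi\colon V_{1}\to V_{2}$, yielding $\overline{\psi}\circ G_{p_{1}}=\phi$. This $\phi$ is an isomorphism between smooth cubic surfaces whose restriction to $X_{1}$ equals $\psi\circ j_{p_{1}}\colon X_{1}\stackrel{\sim}{\rightarrow}X_{2}$, so it carries $C_{1}$ onto $C_{2}$. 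This gives the factorization $\overline{\psi}=\phi\circ G_{p_{1}}$ of the second alternative, with uniqueness immediate since the two alternatives are distinguished by whether $\overline{\psi}$ is regular at $p_{1}$.

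The main obstacle I anticipate is the minimality argument forcing the unique blow-up center to be the cusp $p_{2}$, which must rule out the a priori possibility of a more intricate resolution with several blow-ups at smooth points whose multiplicities still sum to $4$. This exclusion relies on the extremal role of $\tilde{C}_{2}$ among $\sigma$-exceptional curves together with the concurrent minimality of $\sigma$ and $\tau$, and on the specific multiplicity structure of a cuspidal plane cubic. The final assertion of the lemma, that $X_{1}$ and $X_{2}$ are isomorphic if and only if the pairs $(V_{1},C_{1})$ and $(V_{2},C_{2})$ are, then follows immediately: sufficiency is by restriction to the open subset, and necessity is exactly what the main statement produces in both alternatives.
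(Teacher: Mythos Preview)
Your approach is sound and close in spirit to the paper's, but the gap you flag is genuine and the paper sidesteps it by a change of model at the outset. Rather than resolving $\overline{\psi}:V_1\dashrightarrow V_2$ directly, the paper first replaces each $(V_i,C_i)$ by the blow-up $\alpha_i:Y_i\to V_i$ at the cusp $p_i$, so that $X_i\simeq Y_i\setminus(C_i\cup E_i)$ with boundary a pair of smooth $(-1)$-curves $C_i,E_i$ that are mutually \emph{tangent}. It then argues that the lifted map $\overline{\Psi}:Y_1\dashrightarrow Y_2$ is biregular: in a minimal resolution $Y_1\xleftarrow{\sigma_1}W\xrightarrow{\sigma_2}Y_2$, minimality forces the first curve contracted by $\sigma_2$ to be the proper transform of $C_1$ or of $E_1$; for that proper transform to remain a $(-1)$-curve, all $\sigma_1$-centers must lie over the other component; but then the contraction makes the tangent partner singular, and this singular curve cannot land in the smooth boundary $C_2\cup E_2$ of $Y_2$. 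The tangency is the single geometric input that closes the argument, and the dichotomy you want (isomorphism of pairs versus precomposition with $G_{p_1}$) is then read off from whether $\overline{\Psi}$ sends $C_1$ to $C_2$ or to $E_2$.

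Your direct route on the $V_i$ can be completed, but two points are missing. First, the reason $\tilde C_2$ is a $(-1)$-curve is that any $\sigma$-exceptional $(-1)$-curve is, by minimality, not $\tau$-exceptional and hence dominates the irreducible boundary $C_2$; since $\tau$ is birational this forces it to equal $\tilde C_2$. Second---and this is what fills your acknowledged gap---$\tilde C_2$ is then \emph{smooth}, so the $\tau$-blow-ups must desingularize the cusp of $C_2$. Combined with the unique-$(-1)$-curve condition (which forces the $\tau$-centers to form a single chain of infinitely near points), the first $\tau$-center must be $p_2$, whence $m_1=2$ already saturates $\sum m_j^2=4$ and no further center lies on the strict transform of $C_2$. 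Any additional $\tau$-blow-up then produces exactly the singular-image obstruction from the tangency of the strict transform of $C_2$ with the first $\tau$-exceptional. In short, pushing your argument through recovers the paper's tangency contradiction after re-deriving the passage to $Y_i$, whereas the paper simply starts there.
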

\begin{proof}
Letting $\alpha_{i}:Y_{i}\rightarrow V_{i}$ be the blow-up of $V_{i}$
at $p_{i}$, with exceptional divisor $E_{i}$, $X_{i}$ is isomorphic
to $Y_{i}\setminus(C_{i}\cup E_{i})$ where we identified $C_{i}$
and its proper transform in $Y_{i}$. The birational map $\overline{\psi}:V_{1}\dashrightarrow V_{2}$
lifts to a birational $\overline{\Psi}=\alpha_{2}^{-1}\circ\overline{\psi}\circ\alpha_{1}:Y_{1}\dashrightarrow Y_{2}$
extending $\psi$, and the assertion is equivalent to the fact that
$\overline{\Psi}$ is an isomorphism of pairs $(Y_{1},C_{1}\cup E_{1})\stackrel{\sim}{\rightarrow}(Y_{2},C_{2}\cup E_{2})$.
Since $Y_{1}$ and $Y_{2}$ are smooth with the same Picard rank $\rho(Y_{i})=8$,
this holds provided that either $\overline{\Psi}$ or $\overline{\Psi}^{-1}$
is a morphism. So suppose for contradiction that $\overline{\Psi}$
or $\overline{\Psi}^{-1}$ are both strictly birational and let $Y_{1}\stackrel{\sigma_{1}}{\leftarrow}W\stackrel{\sigma_{2}}{\rightarrow}Y_{2}$
be the minimal resolution of $\overline{\Psi}$. Since $Y_{1}$ and
$Y_{2}$ are smooth and $\overline{\Psi}$ and $\overline{\Psi}^{-1}$
are both strictly birational, $\sigma_{1}$ consists of a non-empty
sequence of blow-ups of smooth points whose centers lie over $C_{1}\cup E_{1}$,
while $\sigma_{2}$ is a non-empty sequence of contractions of successives
$(-1)$-curves on $W$ supported on the total transform $\sigma_{1}^{-1}(C_{1}\cup E_{1})$
of $C_{1}\cup E_{1}$. Furthermore, the minimality assumption implies
that the first curve contracted by $\sigma_{2}$ is the proper transform
in $W$ of $C_{1}$ or $E_{1}$. Since $C_{1}$ and $E_{1}$ are $(-1)$-curves
in $Y_{1}$, the only possibility is thus that all successive centers
of $\sigma_{1}$ lie over $E_{1}\setminus C_{1}$ (resp. $C_{1}\setminus E_{1}$)
and that the first curve contracted by $\sigma_{2}$ is the proper
transform of $E_{1}$ (resp. $C_{1})$. But since $C_{1}$ and $E_{1}$
are tangent in $Y_{1}$, so are their proper transforms in $W$, and
then the image of $C_{1}$ (resp. $E_{1})$ by the contraction $\tau:W\rightarrow W'$
of $E_{1}$ (resp. $C_{1}$) factoring $\sigma_{2}:W\rightarrow Y_{2}$
would be singular. Since it cannot be contracted at any further step,
its image by $\sigma_{2}$ would be a singular curve contained in
$Y_{2}\setminus X_{2}=C_{2}\cup E_{2}$, which is absurd. \end{proof}
\begin{cor}
\label{cor:Rigidity} Let $X$ be the complement of a cuspidal hyperplane
section $C$ of a smooth cubic surface $V\subset\mathbb{P}^{3}$.
Then there exists a split exact sequence 
\[
0\rightarrow\mathrm{Aut}(V,C)\rightarrow\mathrm{Aut}(X)\rightarrow\{\mathrm{id}_{X},j_{p}\}\simeq\mathbb{Z}_{2}\rightarrow0,
\]
where $\mathrm{Aut}(V,C)$ is the automorphism group of the pair $(V,C)$
and $j_{p}:X\stackrel{\sim}{\rightarrow}X$ is the biregular involution
induced by the Geiser involution of $V$ with center at the singular
point $p$ of $C$. In particular, $\mathrm{Aut}(X)$ is a finite
group, isomorphic to $\mathbb{Z}_{2}$ for a general smooth cubic
surface $V$. \end{cor}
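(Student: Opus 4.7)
The plan is to apply Lemma \ref{lem:Bir-decomp} in the degenerate situation where $X_{1}=X_{2}=X$, $V_{1}=V_{2}=V$ and $C_{1}=C_{2}=C$. The lemma then asserts that every automorphism $\psi\in\mathrm{Aut}(X)$ extends to a birational self-map $\overline{\psi}:V\dashrightarrow V$ which is either a biregular automorphism of the pair $(V,C)$ or the composition $G_{p}\circ\phi$ of the Geiser involution with some $\phi\in\mathrm{Aut}(V,C)$. Since $X$ is dense in $V$, the restriction map $\mathrm{Aut}(V,C)\rightarrow\mathrm{Aut}(X)$ is injective, and one obtains a set-theoretic map $\rho:\mathrm{Aut}(X)\rightarrow\mathbb{Z}_{2}$ sending $\psi$ to $0$ in the first case and to $1$ in the second, whose kernel coincides with the image of $\mathrm{Aut}(V,C)$.

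To upgrade $\rho$ to a group homomorphism, I would use the fact that every $\phi\in\mathrm{Aut}(V,C)$ must preserve the unique singular point $p$ of $C$, and hence commutes with the Geiser involution $G_{p}$, whose construction is intrinsic to the pointed pair $(V,p)$. It then follows from $G_{p}^{2}=\mathrm{id}$ that the subset $G_{p}\cdot\mathrm{Aut}(V,C)\subset\mathrm{Bir}(V)$ is a single non-trivial coset modulo $\mathrm{Aut}(V,C)$, giving exactness of the sequence. The restriction $j_{p}:X\rightarrow X$ of $G_{p}$ is a biregular involution satisfying $\rho(j_{p})=1$, so that $\langle j_{p}\rangle\simeq\mathbb{Z}_{2}$ provides the required splitting; in view of the commutation relation above, the resulting decomposition is in fact a direct product.

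For the finiteness assertion, I would invoke the classical fact that the automorphism group of a smooth cubic surface in $\mathbb{P}^{3}$ is finite, acting faithfully on the configuration of twenty-seven lines and embedding into the Weyl group $W(E_{6})$. Since $\mathrm{Aut}(V,C)\subseteq\mathrm{Aut}(V)$, this yields the finiteness of $\mathrm{Aut}(X)$ for every smooth cubic $V$. For a general $V$ one has $\mathrm{Aut}(V)=\{\mathrm{id}\}$, whence $\mathrm{Aut}(V,C)$ is trivial and the exact sequence collapses to $\mathrm{Aut}(X)\simeq\mathbb{Z}_{2}$. The main technical point, almost the only one not already handled by Lemma \ref{lem:Bir-decomp}, is the verification that $\rho$ respects composition, and this reduces entirely to the commutation of $G_{p}$ with $\mathrm{Aut}(V,C)$, itself an immediate consequence of the characterization of $p$ as the singular locus of $C$.
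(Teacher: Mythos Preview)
Your proof is correct and follows essentially the same route as the paper: apply Lemma~\ref{lem:Bir-decomp} with $(V_{1},C_{1})=(V_{2},C_{2})=(V,C)$ to get the dichotomy for extensions of automorphisms, and then invoke finiteness (resp.\ triviality) of $\mathrm{Aut}(V)$ for smooth (resp.\ general) cubic surfaces. You are in fact more explicit than the paper on one point: the paper simply asserts that ``the first assertion follows'' from the dichotomy, whereas you spell out why the map $\rho$ is a group homomorphism via the commutation $\phi\circ G_{p}=G_{p}\circ\phi$ for $\phi\in\mathrm{Aut}(V,C)$, deduced from the intrinsic nature of $G_{p}$ and the fact that $\phi$ fixes the singular point $p$ of $C$; this also yields your observation that the extension is a direct product, a mild strengthening of the stated split exact sequence.
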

\begin{proof}
We view $\mathrm{Aut}(V,C)$ as a subgroup of $\mathrm{Aut}(X)$ via
the homomorphism which associates to every automorphism of $V$ preserving
$C$, hence $X$, its restriction to $X$. Since by virtue of the
previous lemma, the extension of every automorphism $\varphi$ of
$X$ to a birational self-map $\overline{\varphi}:V\dashrightarrow V$
is either an automorphism of the pair $(V,C)$ or the composition
of the Geiser involution $G_{p}:V\dashrightarrow V$ with an automorphism
of this pair, the first assertion follows. The second assertion is
a consequence of the fact that the automorphism group $\mathrm{Aut}(V)$
of a smooth cubic surface $V$ is always finite, actually trivial
for a general such surface. 
\end{proof}
The following proposition provides the first part of the proof of
the theorem:
\begin{prop}
\label{prop:Non-iso-A1} Let $X_{i}$ be the complements of cuspidal
hyperplanes sections $C_{i}=V_{i}\cap H_{i}$ of smooth cubic surfaces
$V_{i}\subset\mathbb{P}^{3}$, $i=1,2$. If the pairs $(V_{1},C_{1})$
and $(V_{2},C_{2})$ are not isomorphic then the $\mathbb{A}^{1}$-cylinders
$X_{1}\times\mathbb{A}^{1}$ and $X_{2}\times\mathbb{A}^{1}$ are
not isomorphic. \end{prop}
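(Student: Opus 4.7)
The plan is to chain together the rigidity of the $X_i$ that one reads off from Corollary \ref{cor:Rigidity}, the Makar-Limanov semi-rigidity theorem recalled in the introduction, and the isomorphism classification provided by Lemma \ref{lem:Bir-decomp}. The substantive birational geometry has already been absorbed into the preceding lemma, so this proposition is essentially a three-step reduction.

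First, I would observe that Corollary \ref{cor:Rigidity} shows $\mathrm{Aut}(X_i)$ is a \emph{finite} group for $i=1,2$. Any nontrivial algebraic action of $\mathbb{G}_a$ on $X_i$ would embed $\mathbb{G}_a$ into $\mathrm{Aut}(X_i)$ as a positive-dimensional connected subgroup, contradicting finiteness. Thus each $X_i$ is rigid in the sense of Makar-Limanov, i.e.\ admits no nontrivial $\mathbb{G}_a$-action.

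Next, I would argue by contraposition: assume that there exists an isomorphism $\Phi\colon X_1\times\mathbb{A}^1\stackrel{\sim}{\to}X_2\times\mathbb{A}^1$. By the consequence of Makar-Limanov's semi-rigidity theorem recalled in the introduction, the rigidity of $X_1$ (or of $X_2$) guarantees that any such $\Phi$ descends to a biregular isomorphism $\varphi\colon X_1\stackrel{\sim}{\to}X_2$ of the base surfaces. Concretely, pulling back through $\Phi$ the translation action of $\mathbb{G}_a$ on the second factor of $X_2\times\mathbb{A}^1$ yields a nontrivial $\mathbb{G}_a$-action on $X_1\times\mathbb{A}^1$ whose general orbits project isomorphically to the $\mathbb{A}^1$-factor, and semi-rigidity forces this action to preserve every fiber of $\mathrm{pr}_{X_1}$; hence $\Phi$ has the form $(x,t)\mapsto(\varphi(x),\tau(x,t))$ for some $\varphi$ as claimed.

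Finally, Lemma \ref{lem:Bir-decomp} asserts that the mere existence of an isomorphism $\varphi\colon X_1\stackrel{\sim}{\to}X_2$ implies that the pairs $(V_1,C_1)$ and $(V_2,C_2)$ are isomorphic (the birational extension of $\varphi$ being either an isomorphism of pairs directly, or the composition of the Geiser involution of $V_1$ with one). This contradicts the standing hypothesis, completing the argument. The only non-trivial ingredient one has to invoke as a black box is Makar-Limanov semi-rigidity; no further obstacle arises, since the delicate point—controlling birational self-maps of the cubic surfaces between the complements of the cuspidal sections—has already been dealt with in Lemma \ref{lem:Bir-decomp}.
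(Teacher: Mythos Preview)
Your argument is correct and follows essentially the same route as the paper: rigidity of the $X_i$ from Corollary~\ref{cor:Rigidity}, Makar-Limanov semi-rigidity to descend any isomorphism of the $\mathbb{A}^1$-cylinders to an isomorphism $X_1\simeq X_2$, and then Lemma~\ref{lem:Bir-decomp} to derive a contradiction. The only cosmetic difference is that the paper phrases the semi-rigidity step via the Makar-Limanov invariant $\mathrm{ML}(X_i\times\mathbb{A}^1)=\Gamma(X_i,\mathcal{O}_{X_i})$ rather than your more geometric description of the descent.
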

\begin{proof}
The rigidity of $X_{i}$ asserted by Corollary \ref{cor:Rigidity}
implies by virtue of \cite[Proposition 9.23]{FreuBook} that the Makar-Limanov
invariant $\mathrm{ML}(X_{i}\times\mathbb{A}^{1})$ of $X_{i}\times\mathbb{A}^{1}$
is equal to the sub-algebra $\Gamma(X_{i},\mathcal{O}_{X_{i}})$ of
$\Gamma(X_{i},\mathcal{O}_{X_{i}})[t]=\Gamma(X_{i}\times\mathbb{A}^{1},\mathcal{O}_{X_{i}\times\mathbb{A}^{1}})$.
Since every isomorphism between two algebras induces an isomorphism
between their Makar-Limanov invariants, it follows that every isomorphism
$X_{1}\times\mathbb{A}^{1}\stackrel{\sim}{\rightarrow}X_{2}\times\mathbb{A}^{1}$
descends to a unique isomorphism $\psi:X_{1}\stackrel{\sim}{\rightarrow}X_{2}$
making the following diagram commutative \[\xymatrix{X_1\times \mathbb{A}^1 \ar[r]^{\sim} \ar[d]_{\mathrm{pr}_{X_1}} & X_2 \times \mathbb{A}^1 \ar[d]^{\mathrm{pr}_{X_2}} \\ X_1 \ar[r]^{\psi} & X_2.}\] On
the other hand, the hypothesis that the pairs $(V_{1},C_{1})$ and
$(V_{2},C_{2})$ are not isomorphic combined with Lemma \ref{lem:Bir-decomp},
implies that $X_{1}$ is not isomorphic to $X_{2}$ and so, $X_{1}\times\mathbb{A}^{1}$
is not isomorphic to $X_{2}\times\mathbb{A}^{1}$. 
\end{proof}

\subsection{Isomorphisms between $\mathbb{A}^{2}$-cylinders}

As explained above, the first step of the construction is a Danielewski
fiber product trick creating a smooth affine threefold $W$ which
is simultaneously the total space of a line bundle over $X_{1}$ and
$X_{2}$. To setup such a fiber product argument, we first construct
a certain smooth algebraic space $\delta:B\rightarrow\mathbb{P}^{1}$
with the property that every complement $X$ of an irreducible cuspidal
hyperplane section $C$ of a smooth cubic surface $V\subset\mathbb{P}^{3}$
admits the structure of an \'etale locally trivial $\mathbb{A}^{1}$-bundle
$\rho:X\rightarrow B$.

\subsubsection{\noindent}

Letting $\mathbb{P}^{1}=\mathrm{Proj}(\mathbb{C}[z_{0},z_{1}])$,
the algebraic space $\delta:B\rightarrow\mathbb{P}^{1}$ is obtained
by the following gluing procedure:

1) We let $U_{\infty}=\mathbb{P}^{1}\setminus\{0\}\simeq\mathrm{Spec}(\mathbb{C}[w_{\infty}])$,
where $w_{\infty}=z_{1}/z_{0}$, and we let $\delta_{\infty}:B_{\infty}\rightarrow U_{\infty}$
be the scheme isomorphic to affine line with a $6$-fold origin obtained
by gluing six copies $\delta_{\infty,i}:U_{\infty,i}\stackrel{\sim}{\rightarrow}U_{\infty}$,
$i=1,\ldots,6$ of $U_{\infty}$, by the identity outside the points
$\infty_{i}=\delta_{\infty,i}^{-1}(\infty)$. 

2) We $U_{0}=\mathbb{P}^{1}\setminus\{\infty\}\simeq\mathrm{Spec}(\mathbb{C}[w_{0}])$,
where $w_{0}=z_{0}/z_{1}$, we let $\xi:\tilde{U}_{0}\simeq\mathbb{A}^{1}=\mathrm{Spec}(\mathbb{C}[\tilde{w}_{0}])\rightarrow U_{0}\simeq\mathrm{Spec}(\mathbb{C}[w_{0}])$,
$\tilde{w}_{0}\mapsto w_{0}=\tilde{w}_{0}^{3}$ be the triple Galois
cover totally ramified over $0$ and \'etale elsewhere, and we let
$\tilde{\delta}_{0}:\tilde{B}_{0}\rightarrow\tilde{U}_{0}$ be the
scheme isomorphic to the affine line with $3$-fold origin obtained
by gluing three copies $\tilde{U}_{0,1}$, $\tilde{U}_{0,\omega}$
and $\tilde{U}_{0,\omega^{2}}$ of $\tilde{U}_{0}$ by the identity
outside their respective origins $\tilde{0}_{0,1}$, $\tilde{0}_{0,\omega}$
and $\tilde{0}_{0,\omega^{2}}$. The action of the Galois group $\mu_{3}=\{1,\omega,\omega^{2}\}$
of complex third roots of unity of the covering $\xi$ lifts to fixed
point free action on $\tilde{B}_{0}$ given locally by $\tilde{U}_{0,\eta}\ni\tilde{z}_{0}\mapsto\omega\tilde{z}_{0}\in\tilde{U}_{0,\omega\eta}$.
Since the latter has trivial isotropies, a geometric quotient exists
in the category of algebraic spaces in the form an \'etale locally
trivial $\mu_{3}$-bundle $\tilde{B}_{0}\rightarrow\tilde{B}_{0}/\mu_{3}=B_{0}$
over a certain algebraic space $B_{0}$. Furthermore, the $\mu_{3}$-equivariant
morphism $\tilde{\delta}_{0}:\tilde{B}_{0}\rightarrow\tilde{U}_{0}$
descends to a morphism $\delta_{0}:B_{0}\rightarrow\tilde{U}_{0}/\negmedspace/\mu_{3}\simeq U_{0}$
restricting to an isomorphism over $U_{0}\setminus\{0\}$ and totally
ramified over $\{0\}$, with ramification index $3$. 

3) Finally, $\delta:B\rightarrow\mathbb{P}^{1}$ is obtained by gluing
$\delta_{\infty}:B_{\infty}\rightarrow U_{\infty}$ and $\delta_{0}:B_{0}\rightarrow U_{0}$
along the open sub-schemes $\delta_{\infty}^{-1}(U_{0}\cap U_{\infty})\simeq\mathrm{Spec}(\mathbb{C}[w_{\infty}^{\pm1}])$
and $\delta_{0}^{-1}(U_{0}\cap U_{\infty})\simeq\mathrm{Spec}(\mathbb{C}[w_{\infty}^{\pm1}])$
by the isomorphism $w_{\infty}\mapsto w_{0}=w_{\infty}^{-1}$. 
\begin{rem}
Letting $p_{0}$ be the unique closed point of $B$ over $0\in U_{0}\subset\mathbb{P}^{1}$,
we have $\delta^{-1}(0)=3p_{0}$ while the restriction of $\delta$
over $U_{0}\setminus\{0\}$ is an isomorphism. This implies that $B$
is not a scheme, for otherwise the restriction of $\delta$ over $U_{0}$
would be an isomorphism by virtue of Zariski Main Theorem. In fact,
$p_{0}$ is a point which does not admit any affine open neighborhood
$V$: otherwise the inverse image of $V$ by the finite morphism $\tilde{B}_{0}\rightarrow\tilde{B}_{0}/\mu_{3}=B_{0}$
would be an affine open sub-scheme of $\tilde{B}_{0}$ containing
the three points $\tilde{0}_{0,1}$, $\tilde{0}_{0,\omega}$ and $\tilde{0}_{0,\omega^{2}}$
which is impossible. 
\end{rem}

\subsubsection{\noindent}

Since the automorphism group of $\mathbb{A}^{1}$ is the affine group
$\mathrm{Aff}_{1}=\mathbb{G}_{m}\ltimes\mathbb{G}_{a}$, every \'etale
locally trivial $\mathbb{A}^{1}$-bundle $\rho:S\rightarrow B$ is
an affine-linear bundle whose isomorphy class is determined by an
element in the non-abelian cohomology group $H_{\mathrm{\acute{e}t}}^{1}(B,\mathrm{Aff}_{1})$.
Equivalently $\rho:S\rightarrow B$ is a principal homogeneous bundle
under the action of a line bundle $L\rightarrow B$, considered as
a locally constant group scheme over $B$ for the group law induced
by the addition of germs of sections, whose class in $\mathrm{Pic}(B)\simeq H_{\mathrm{\acute{e}t}}^{1}(B,\mathbb{G}_{m})$
coincides with the image of the isomorphy class of $\rho:S\rightarrow B$
in $H_{\mathrm{\acute{e}t}}^{1}(B,\mathrm{Aff}_{1})$ by the map $H_{\mathrm{\acute{e}t}}^{1}(B,\mathrm{Aff}_{1})\rightarrow H_{\mathrm{\acute{e}t}}^{1}(B,\mathbb{G}_{m})$
in the long exact sequence of non-abelian cohomology 

\[
0\rightarrow H^{0}\left(B,\mathbb{G}_{a}\right)\rightarrow H^{0}\left(B,\mathrm{Aff}_{1}\right)\rightarrow H^{0}\left(B,\mathrm{Aff}_{1}\right)\rightarrow H_{\mathrm{\acute{e}t}}^{1}\left(B,\mathbb{G}_{a}\right)\rightarrow H_{\mathrm{\acute{e}t}}^{1}\left(B,\mathrm{Aff}_{1}\right)\rightarrow H_{\mathrm{\acute{e}t}}^{1}\left(B,\mathbb{G}_{m}\right)
\]
associated to the short exact sequence $0\rightarrow\mathbb{G}_{a}\rightarrow\mathrm{Aff}_{1}\rightarrow\mathbb{G}_{m}\rightarrow0$.
Isomorphy classes of principal homogeneous under a given line bundle
$L\rightarrow B$ are are in turn classified by the cohomology group
$H_{\mathrm{\acute{e}t}}^{1}(B,L)$.
\begin{prop}
\label{prop:A1-fibration} The complement $X$ of a cuspidal hyperplane
section $C$ of a smooth cubic surface $V\subset\mathbb{P}^{3}$ admits
an $\mathbb{A}^{1}$-fibration $f:X\rightarrow\mathbb{P}^{1}$ which
factors through a principal homogeneous bundle $\rho:X\rightarrow B$
under the action of the cotangent line bundle $\gamma:\Omega_{B}^{1}\rightarrow B$
of $B$. \end{prop}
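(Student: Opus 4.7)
The plan is to build the $\mathbb{A}^{1}$-fibration $f:X\to\mathbb{P}^{1}$ from a base-point-free pencil on a suitable smooth projective completion of $X$, to recognize its canonical factorization as $\delta\circ\rho$, and finally to identify the torsor structure under $\Omega_{B}^{1}$ by a transition-cocycle computation.

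First I would produce a smooth projective completion: starting from the blow-up $\alpha:Y\to V$ at the cusp $p$, I would resolve the tangential contact between $\tilde C$ and $E$ by a short sequence of further blow-ups to obtain $\tilde Y$ with SNC boundary $\tilde D$, such that $X=\tilde Y\setminus\tilde D$. On $\tilde Y$ I would exhibit a base-point-free pencil $|F|$ whose generic member is a smooth rational curve meeting $\tilde D$ transversally at a single point on a chosen component; the numerical class is characterized by $F^{2}=0$, $F\cdot K_{\tilde Y}=-2$ and $F\cdot\tilde D=1$. Its existence would be deduced from the Picard-theoretic data of the weak del Pezzo surface $Y$, exploiting the configuration of $(-1)$-curves arising from the $27$ lines of $V$ in the position determined by $p$.

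Next I would analyze the singular fibers of the morphism $\bar f:\tilde Y\to\mathbb{P}^{1}$ defined by $|F|$. The count of blow-ups and the numerical constraints on $F$ leave room for exactly two degenerate fibers: a reducible fiber decomposing into six rational components that survive in $X$ as six disjoint affine lines (accounting for the six-fold origin in $B_{\infty}$), and a non-reduced fiber of multiplicity three whose unique open component in $X$ is a single affine line (accounting for the ramification of index $3$ of $B_{0}\to U_{0}$). The Stein-like factorization of $f=\bar f|_{X}$ in the category of algebraic spaces then produces $X\xrightarrow{\rho} B'\xrightarrow{\delta'}\mathbb{P}^{1}$ with $\rho$ having connected reduced fibers and $\delta'$ birational; comparing with the gluing prescription of \'etale-locally ringed charts defining $(B,\delta)$ identifies $(B',\delta')$ with $(B,\delta)$, so $f=\delta\circ\rho$ as desired.

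Finally, since $\rho:X\to B$ is a smooth morphism with reduced $\mathbb{A}^{1}$-fibers, it is an \'etale locally trivial $\mathbb{A}^{1}$-bundle, hence an $\mathrm{Aff}_{1}$-torsor. Its image in $\mathrm{Pic}(B)=H_{\mathrm{\acute et}}^{1}(B,\mathbb{G}_{m})$ is the associated line bundle $L$, and I would verify that $L\simeq\Omega_{B}^{1}$ by computing the transition cocycle of $\rho$ with respect to local sections over $B_{0}$ and $B_{\infty}$, pulling back to $\tilde B_{0}$ to exploit the Galois action of $\mu_{3}$, and matching the resulting cocycle with that of $\Omega_{B}^{1}$ determined by the gluing $w_{0}=w_{\infty}^{-1}$ together with the ramification contribution of $\xi$. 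The main obstacle is precisely this last identification: on the non-scheme algebraic space $B$ one cannot work Zariski-locally, so the cocycle comparison has to be carried out on $\tilde B_{0}\to B_{0}$ and then descended via the $\mu_{3}$-action, which is where the specific arithmetic of the triple cover $\xi:\tilde U_{0}\to U_{0}$, $\tilde w_{0}\mapsto\tilde w_{0}^{3}$, enters to match the Jacobian of the gluing with the class of $\Omega_{B}^{1}$.
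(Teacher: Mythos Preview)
Your outline is broadly correct and parallels the paper's argument in its first two stages: construct an $\mathbb{A}^{1}$-fibration $f:X\to\mathbb{P}^{1}$, read off its two degenerate fibers (six reduced affine lines, and one of multiplicity three), and recognize the canonical factorization through $B$. The paper does this a bit more directly than you: rather than blowing up further to reach an SNC completion and searching for $|F|$ numerically, it \emph{contracts} six disjoint lines $L_{1},\dots,L_{6}$ on $V$ to obtain $\tau:V\to\mathbb{P}^{2}$, where $\tau(C)$ is a cuspidal plane cubic, and then simply lifts the pencil on $\mathbb{P}^{2}$ generated by $\tau(C)$ and $3T$ (with $T$ the cuspidal tangent). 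This makes the two degenerate fibers visible immediately and avoids any search in $\mathrm{Pic}(\tilde Y)$.

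The substantive divergence is in the identification $L\simeq\Omega_{B}^{1}$, which you flag as the main obstacle and propose to attack by an explicit \'etale cocycle comparison through the $\mu_{3}$-cover $\tilde B_{0}\to B_{0}$. The paper bypasses this computation entirely with a two-line conceptual argument you are missing: the relative cotangent sequence of $\rho$,
\[
0\longrightarrow \rho^{*}\Omega_{B}^{1}\longrightarrow \Omega_{X}^{1}\longrightarrow \Omega_{X/B}^{1}\simeq \rho^{*}L^{\vee}\longrightarrow 0,
\]
gives $\det\Omega_{X}^{1}\simeq\rho^{*}(\Omega_{B}^{1}\otimes L^{\vee})$; but $C$ is an anti-canonical divisor on $V$, so $\det\Omega_{X}^{1}=K_{X}$ is trivial, and since $\rho^{*}:\mathrm{Pic}(B)\to\mathrm{Pic}(X)$ is an isomorphism one concludes $L\simeq\Omega_{B}^{1}$. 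Your cocycle route could in principle succeed, but it trades a clean invariant-theoretic fact (triviality of $K_{X}$) for a delicate local calculation on a non-scheme algebraic space; the paper's approach is both shorter and more robust, and explains \emph{why} $\Omega_{B}^{1}$ appears rather than merely verifying it.
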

\begin{proof}
Since $C$ is an anti-canonical divisor on $V$, it follows from adjunction
formula that every line on $V$ intersects $C$ transversally in a
unique point. The image of $C$ by the contraction $\tau:V\rightarrow\mathbb{P}^{2}$
of any $6$-tuple of disjoint lines, $L_{1},\ldots,L_{6}$, on $V$
is therefore a rational cuspidal cubic containing the images $q_{i}=\tau(L_{i})$,
$i=1,\ldots,6$, in its regular locus. The rational pencil $\mathbb{P}^{2}\dashrightarrow\mathbb{P}^{1}$
generated by $\tau(C)$ and three times its tangent line $T$ at its
singular point lifts to a rational pencil $\overline{f}:V\dashrightarrow\mathbb{P}^{1}$
whose restriction to $X$ is an $\mathbb{A}^{1}$-fibration $f:X\rightarrow\mathbb{P}^{1}$
with two degenerate fibers: one irreducible of multiplicity three
consisting of the intersection of the proper transform of $T$ with
$X$, and a reduced one consisting of the disjoint union of the curves
$L_{i}\cap X\simeq\mathbb{A}^{1}$, $i=1,\ldots,6$. Choosing homogeneous
coordinates $[z_{0}:z_{1}]$ on $\mathbb{P}^{1}$ in such a way that
$0$ and $\infty$ are the respective images of $T$ and $C$ by $\overline{f}$,
the same argument as in \cite[§4]{DubFlex15} implies that $f:X\rightarrow\mathbb{P}^{1}$
factors through an \'etale locally trivial $\mathbb{A}^{1}$-bundle
$\rho:X\rightarrow B$. Letting $\gamma:L\rightarrow B$ be the line
bundle under which $\rho:X\rightarrow B$ becomes a principal homogeneous
bundle, it follows from the relative cotangent exact sequence 
\[
0\rightarrow\rho^{*}\Omega_{B}^{1}\rightarrow\Omega_{X}^{1}\rightarrow\Omega_{X/B}^{1}\simeq\rho^{*}L^{\vee}\rightarrow0
\]
of $\rho$ that $\det\Omega_{X}^{1}\simeq\rho^{*}(\Omega_{B}^{1}\otimes L^{\vee})$.
Since $\det\Omega_{X}^{1}$ is trivial as $C$ is an anti-canonical
divisor on $V$ and since $\rho^{*}:\mathrm{Pic}(B)\rightarrow\mathrm{Pic}(X)$
is an isomorphism because $\rho:X\rightarrow B$ is a locally trivial
$\mathbb{A}^{1}$-bundle, we conclude that $L\simeq\Omega_{B}^{1}$. \end{proof}
\begin{rem}
By construction of $\delta:B\rightarrow\mathbb{P}^{1}$, we have $\delta^{-1}(0)=3p_{0}$
and $\delta^{-1}(\infty)=\sum_{i=1}^{6}\infty_{i}$. The Picard group
$\mathrm{Pic}(B)$ of $B$ is thus isomorphic to $\mathbb{Z}^{6}$
generated by the classes of the lines bundle $\mathcal{O}_{B}(p_{0})$,
$\mathcal{O}_{B}(\infty_{i})$, $i=1,\ldots,6$, with the unique relation
$\mathcal{O}_{B}(3p_{0})=\mathcal{O}_{B}(\sum_{i=1}^{6}\infty_{i})$.
Furthermore, since $\delta$ is \'etale except at $p_{0}$ where
it has ramification index $3$, we deduce from the ramification formula
for the morphism $\delta:B\rightarrow\mathbb{P}^{1}$ that the cotangent
bundle $\gamma:\Omega_{B}^{1}\rightarrow B$ of $B$ is isomorphic
to 
\[
\delta^{*}\Omega_{\mathbb{P}^{1}}\otimes_{\mathcal{O}_{B}}\mathcal{O}_{B}(2p_{0})\simeq\delta^{*}(\mathcal{O}_{\mathbb{P}^{1}}(-\{0\}-\{\infty\}))\otimes_{\mathcal{O}_{B}}\mathcal{O}_{B}(2p_{0})\simeq\mathcal{O}_{B}(-p_{0}-\sum_{i=1}^{6}\infty_{i}).
\]

\end{rem}

\subsubsection{\noindent}

Now we are ready for the second step of the construction, which completes
the proof of the theorem. Letting $X_{i}$, $i=1,2$, be the complements
of irreducible plane cuspidal hyperplanes sections $C_{i}=V_{i}\cap H_{i}$
of smooth cubic surfaces $V_{i}\subset\mathbb{P}^{3}$, Proposition
\ref{prop:A1-fibration} asserts the existence of principal homogeneous
bundles $\rho_{i}:X_{i}\rightarrow B$ under the action of the cotangent
line bundle $\gamma:\Omega_{B}^{1}\rightarrow B$ of $B$. The fiber
product $W=X_{1}\times_{B}X_{2}$ inherits via the first and second
projections respectively the structure of a principal homogeneous
bundle $\pi_{i}:W\rightarrow X_{i}$ under $\rho_{i}^{*}\Omega_{B}^{1}$,
$i=1,2$. Since $X_{i}$ is affine, the vanishing of $H_{\mathrm{\acute{e}t}}^{1}(X_{i},\rho_{i}^{*}\Omega_{B}^{1})$
implies that these bundles are both trivial, yielding isomorphisms
$\rho_{1}^{*}\Omega_{B}^{1}\simeq W\simeq\rho_{2}^{*}\Omega_{B}^{1}$.
Letting $q:E\rightarrow W$ be the pull-back of the dual $(\Omega_{B}^{1})^{\vee}$
of $\Omega_{B}^{1}$ by the morphism $\rho_{1}\circ\pi_{1}=\rho_{2}\circ\pi_{2}:W\rightarrow B$,
$\pi_{i}\circ q:E\rightarrow S_{i}$ is a vector bundle over $X_{i}$
isomorphic to the direct sum of $\rho_{i}^{*}\Omega_{B}^{1}$ and
$\rho_{i}^{*}(\Omega_{B}^{1})^{\vee}$: \[\xymatrix{ X_1 \times \mathbb{A}^2 \simeq_{X_1} \rho_1^*\Omega^1_B \oplus \rho_1^*(\Omega^1_B)^{\vee} \ar[d] \ar[r]^-{\sim} & E \ar[d]_{q}  & \rho_2^*\Omega^1_B \oplus \rho_2^*(\Omega^1_B)^{\vee} \simeq_{X_2} X_1\times \mathbb{A}^2 \ar[d] \ar[l]_-{\sim} \\ 
\rho_1^* \Omega^1_B \ar[r]^{\sim} \ar[d] & X_1\times_B X_2 \ar[dl]_{\pi_1} \ar[dr]^{\pi_2} & \rho_2^* \Omega^1_B \ar[d] \ar[l]_{\sim} \\ X_1 \ar[dr]^{\rho_1} & & X_2 \ar[dl]_{\rho_2} \\  & B & }\] So by virtue of \cite[Theorem 3.1]{Mur69}, $E$ is isomorphic as
a vector bundle over $X_{i}$ to $\det(\rho_{i}^{*}\Omega_{B}^{1}\oplus\rho_{i}^{*}(\Omega_{B}^{1})^{\vee})\oplus\mathbb{A}_{X_{i}}^{1}\simeq\mathbb{A}_{X_{i}}^{1}\oplus\mathbb{A}_{X_{i}}^{1}$
providing the desired isomorphisms $X_{1}\times\mathbb{A}^{2}\simeq E\simeq X_{2}\times\mathbb{A}^{2}$. 
\begin{example}
Let $V\subset\mathbb{P}^{3}$ be a general smooth cubic surface and
let $\Delta\subset V$ be the curve consisting of points $p$ of $V$
at which the projective tangent hyperplane $T_{p}V\subset\mathbb{P}^{3}$
of $V$ at $p$ intersects $V$ along a cuspidal cubic. Let $\mathcal{V}=\Delta\times V$
and let $\mathcal{C}\subset\mathcal{V}$ be relatively ample Cartier
divisor with respect to $\mathrm{pr}_{\Delta}:\mathcal{V}\rightarrow\Delta$
whose fiber $\mathcal{C}_{p}$ over every point $p\in\Delta$ is equal
to the intersection $C_{p}=V\cap T_{p}V$. Since $\mathrm{Aut}(V)$
is trivial, the pairs $(V,C_{p})$, $p\in\Delta$, are pairwise non
isomorphic, and so $\Theta=\mathrm{pr}_{\Delta}\mid_{\mathcal{X}}:\mathcal{X}=\mathcal{V}\setminus\mathcal{C}\rightarrow\Delta$
is a family of pairwise non isomorphic rigid smooth affine surfaces
whose $\mathbb{A}^{2}$-cylinders are all isomorphic. \end{example}
\begin{rem}
The $\mathbb{A}^{2}$-cylinder $X\times\mathbb{A}^{2}$ over the complement
$X$ of a cuspidal hyperplane section $C$ of a smooth cubic surface
$V$ is \emph{flexible in codimension} $1$, that is, for every closed
point $p$ outside a possible empty closed subset $Z\subset X\times\mathbb{A}^{2}$
of codimension at least two, the tangent space $T_{X\times\mathbb{A}^{2},p}$
of $X\times\mathbb{A}^{2}$ at $p$ is spanned by tangent vectors
to orbits of algebraic $\mathbb{G}_{a}$-actions on $X\times\mathbb{A}^{2}$.
This can be seen as follows: one first constructs by a similar procedure
as in \cite[§3.2]{DubFlex15} a flexible mate $S$ for $X$, in the
form of smooth affine surface flexible in codimension $1$ admitting
an $\mathbb{A}^{1}$-fibration $\pi:S\rightarrow\mathbb{P}^{1}$ which
factors through a principal homogeneous bundle $\tilde{\pi}:S\rightarrow B$
under the action of a certain line bundle $\gamma':L\rightarrow B$.
The fiber product $S\times_{B}X$ is then a smooth affine threefold
which is simultaneously isomorphic to the total spaces of the line
bundles $\tilde{\pi}^{*}\Omega_{B}^{1}$ and $\rho^{*}L$ over $S$
and $X$ via the first and second projection respectively. Since $S$
is flexible in codimension $1$, it follows from \cite[Lemma 2.3]{DubFlex15}
that $S\times_{B}X$ and the total space $F\rightarrow S\times_{B}X$
of the pull-back of $L^{\vee}$ by the morphism $\tilde{\pi}\circ\mathrm{pr}_{S}=\rho\circ\mathrm{pr}_{X}$
are both flexible in codimension $1$. By construction, $F$ is a
vector bundle of rank $2$ over $X$, isomorphic to $\rho^{*}(L\oplus L^{\vee})$
hence to the trivial vector bundle $X\times\mathbb{A}^{2}$ by virtue
of \cite[Theorem 3.1]{Mur69}. 
\end{rem}
\bibliographystyle{amsplain}

\end{document}